\documentclass[letterpaper,12pt]{article}
\usepackage[margin=1in]{geometry}
\usepackage{verbatim}
\usepackage{amsmath}
\usepackage{amssymb}
\usepackage{graphicx}
\usepackage{amsthm}

\def\epsilon{\varepsilon}
\def\bigoh{\mathcal{O}}

\def\mod{\,\mathop{\rm mod}\,}
\def\9dots{\vdots\,\vdots\,\vdots}
\def\phi{\varphi}
\def\th{{\rm th}}
\def\ith{{\it th}}

\def\0s{{\bf 0}}

\newtheorem{theorem}{Theorem}[section]
\newtheorem{lemma}[theorem]{Lemma}

\newtheorem{observe}[theorem]{Observation}
\newtheorem{remark1}[theorem]{Remark}

\newenvironment{observation}{\begin{observe} \rm}{\end{observe}}
\newenvironment{remark}{\begin{remark1} \rm}{\end{remark1}}

\title{A fast randomized algorithm for orthogonal projection}
\author{Vladimir Rokhlin and Mark Tygert}
\date{December 10, 2009}

\begin{document}

\maketitle

\begin{abstract}
We describe an algorithm that, given any full-rank matrix $A$
having fewer rows than columns,
can rapidly compute the orthogonal projection of any vector
onto the null space of $A$, as well as the orthogonal projection
onto the row space of $A$,
provided that both $A$ and its adjoint $A^*$ can be applied rapidly
to arbitrary vectors.
As an intermediate step, the algorithm solves
the overdetermined linear least-squares regression involving $A^*$
(and so can be used for this, too).
The basis of the algorithm is an obvious but numerically unstable scheme;
suitable use of a preconditioner yields numerical stability.
We generate the preconditioner rapidly via a randomized procedure
that succeeds with extremely high probability.
In many circumstances, the method can accelerate interior-point methods
for convex optimization, such as linear programming
(Ming Gu, personal communication).
\end{abstract}

\section{Introduction}

This article introduces an algorithm that,
given any full-rank matrix $A$ having fewer rows than columns, such that
both $A$ and its adjoint $A^*$ can be applied rapidly to arbitrary vectors,
can rapidly compute
\begin{enumerate}
\item the orthogonal projection of any vector $b$ onto the null space of $A$,
\item the orthogonal projection of $b$ onto the row space of $A$, or
\item the vector $x$ minimizing the Euclidean norm $\| A^* \, x - b \|$
of the difference between $A^* \, x$ and the given vector $b$
(thus solving the overdetermined linear least-squares regression
$A^* \, x \approx b$).
\end{enumerate}
For simplicity,\,we focus on projecting
onto the null space,\,describing extensions in Remark\,\ref{least-squares}.

The basis for our algorithm is the well-known formula
for the orthogonal projection $Zb$ of a vector $b$ onto the null space
of a full-rank matrix $A$ having fewer rows than columns:
\begin{equation}
\label{basic_identity}
Zb = b - A^* \, (A \, A^*)^{-1} \, A \, b.
\end{equation}
It is trivial to verify that $Z$ is the orthogonal projection
onto the null space of $A$, by checking
that $ZZ = Z$ (so that $Z$ is a projection),
that $Z^* = Z$ (so that $Z$ is self-adjoint,
making the projection ``orthogonal''), and
that $AZ = 0$ (so that $Zb$ is in the null space of $A$ for any vector $b$).
Since we are assuming that $A$ and $A^*$ can be applied rapidly
to arbitrary vectors, (\ref{basic_identity}) immediately yields
a fast algorithm for computing $Zb$.
Indeed, we may apply $A$ to each column of $A^*$ rapidly,
obtaining the smaller square matrix $A \, A^*$;
we can then apply $A$ to $b$, solve $(A \, A^*) \, x = A \, b$ for $x$,
apply $A^*$ to $x$, and subtract the result from $b$, obtaining $Zb$.
However, using~(\ref{basic_identity}) directly is numerically unstable;
using~(\ref{basic_identity}) directly is analogous
to using the normal equations for computing the solution
to a linear least-squares regression (see, for example,~\cite{golub-van_loan}).

We replace~(\ref{basic_identity}) with a formula that is equivalent
in exact arithmetic (but not in floating-point), namely
\begin{equation}
\label{basic_precond_intro}
Zb = b
   - A^* \, \left(P^*)^{-1} \, (P^{-1} \, A \, A^* \, (P^*)^{-1}\right)^{-1}
     \, P^{-1} \, A \, b,
\end{equation}
where $P$ is a matrix such that $P^{-1} \, A$ is well-conditioned.
If $A$ is very short and fat, then $P$ is small and square,
and so we can apply $P^{-1}$ and $(P^*)^{-1}$ reasonably fast
to arbitrary vectors.
Thus, given a matrix $P$ such that $P^{-1} \, A$ is well-conditioned,
(\ref{basic_precond_intro}) yields a numerically stable fast algorithm
for computing $Zb$.
We can rapidly produce such a matrix $P$ via the randomized method
of~\cite{rokhlin-tygert}, which is based on the techniques
of~\cite{drineas-mahoney-muthukrishnan-sarlos} and its predecessors.
Although the method of the present paper is randomized,
the probability of numerical instability is negligible
(see Remark~\ref{stability} in Section~\ref{algorithm} below).
A preconditioned iterative approach similar to that in~\cite{rokhlin-tygert}
is also feasible.
Related work includes~\cite{avron-maymounkov-toledo}
and~\cite{gotsman-toledo}.

The remaining sections cover the following:
Section~\ref{prelims} summarizes relevant facts from prior publications.
Section~\ref{apparatus} details the mathematical basis for the algorithm.
Section~\ref{algorithm} describes the algorithm of the present paper
in detail, and estimates its computational costs.
Section~\ref{numerical} reports the results of several numerical experiments.

In the present paper, the entries of all matrices are real-valued,
though the algorithm and analysis extend trivially to matrices
whose entries are complex-valued.
We use the term ``condition number'' to refer to the $l^2$ condition number,
the greatest singular value divided by the least
(the least is the $m^\th$ greatest singular value,
for an $m \times n$ matrix with $m \le n$).

\section{Preliminaries}
\label{prelims}

In this section, we summarize several facts about the spectra
of random matrices and about techniques for preconditioning.

\subsection{Singular values of random matrices}
\label{random_singular_values}

The following lemma provides a highly probable upper bound
on the greatest singular value
of a matrix whose entries are independent, identically distributed
(i.i.d.) Gaussian random variables of zero mean and unit variance;
Formula~8.8 in~\cite{goldstine-von_neumann} provides an equivalent formulation
of the lemma.

\begin{lemma}
\label{greatest_value}
Suppose that $l$ and $m$ are positive integers with $l \ge m$.
Suppose further that $H$ is an $m \times l$ matrix whose entries are
i.i.d.\ Gaussian random variables of zero mean and unit variance,
and $\alpha$ is a positive real number, such that
$\alpha > 1$ and
\begin{equation}
\label{failure_prob}
\pi_+ = 1 - \frac{1}{4 \, (\alpha^2-1) \, \sqrt{\pi l \alpha^2}}
        \left( \frac{2 \alpha^2}{e^{\alpha^2-1}} \right)^l
\end{equation}
is nonnegative.

Then, the greatest singular value of $H$ is no greater than
$\sqrt{2l} \, \alpha$ with probability no less than $\pi_+$ defined
in~(\ref{failure_prob}).
\end{lemma}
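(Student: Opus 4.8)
The plan is to work from the variational characterization $\sigma_{\max}(H)=\sup_{x\in S^{l-1}}\|Hx\|$, with $S^{l-1}$ the unit sphere in $\R^l$, and to bound the upper tail $\Prob[\sigma_{\max}(H)>\sqrt{2l}\,\alpha]$. First I would record a monotonicity/coupling reduction that explains why $\pi_+$ involves only $l$ and $\alpha$ and not $m$: let $\tilde H$ be an $l\times l$ matrix of i.i.d.\ standard Gaussians and let $H$ be its top $m$ rows; then $\|Hx\|^2=\sum_{i=1}^m(\tilde Hx)_i^2\le\|\tilde Hx\|^2$ for every $x$, so $\sigma_{\max}(H)\le\sigma_{\max}(\tilde H)$ pointwise. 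Because $l\ge m$, it therefore suffices to prove the bound in the square case $m=l$, where $\sigma_{\max}(\tilde H)^2=\lambda_{\max}(W)$ for the Wishart matrix $W=\tilde H\tilde H^{T}$; the general case then follows with the same $\pi_+$.

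The building block for the square case is that along any fixed direction the squared length is a clean chi-square: for fixed unit $x$ the vector $\tilde Hx$ is $N(0,I_l)$ by rotational invariance, so $\|\tilde Hx\|^2\sim\chi^2_l$, and the analytic engine throughout is the sharp upper tail of such variables. I would extract that tail by estimating the incomplete-Gamma integral $\int_s^\infty u^{k/2-1}e^{-u/2}\,du$ directly — dominating the integrand beyond the threshold by its boundary value times a geometric factor controlled by the gap $s/2-(k/2-1)$ — and then inserting Stirling's formula for $\Gamma(k/2)$. This yields an exponential rate of the form $\big((s/k)\,e^{1-s/k}\big)^{k/2}$ together with a polynomial prefactor inversely proportional both to that gap and to $\sqrt{k}$; once the effective degrees of freedom and threshold are pinned down, the gap is what surfaces as the $(\alpha^2-1)$ in the final denominator.

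The genuinely hard part is passing from a single fixed direction to the supremum over all directions without forfeiting the constant. A crude discretization will not do: a single $\epsilon$-net on the sphere (or a bilinear net controlling $\sup_{x,y}y^{T}\tilde Hx$) inflates the per-direction rate by a factor like $(1+2/\epsilon)^{l}$ that cannot be absorbed for moderate $\alpha$, and in fact destroys the decay of the bound precisely in the range where the lemma is still nontrivial. The route I expect to be the crux, and which matches Formula~8.8 of \cite{goldstine-von_neumann}, is to control $\Prob[\lambda_{\max}(W)\ge 2l\alpha^2]$ through the exact distribution of the largest Wishart eigenvalue — for example via the Laplace-transform bound $\Prob[\lambda_{\max}(W)\ge t]\le\inf_{\theta>0}e^{-\theta t}\,\E[\operatorname{tr}\,e^{\theta W}]$ evaluated with the moment generating structure of the Gaussian quadratic form, or directly from the Laguerre-ensemble density. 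This exact aggregation is what supplies the extra factor (morally $2^{l}$) beyond the naive single-direction chi-square tail, raising the base from $\alpha^2e^{1-\alpha^2}$ to $2\alpha^2e^{1-\alpha^2}$; combined with the sharp Stirling estimate and simplified, it should reproduce the rate $\big(2\alpha^2/e^{\alpha^2-1}\big)^{l}$ and the prefactor $1/\big(4(\alpha^2-1)\sqrt{\pi l\alpha^2}\,\big)$.
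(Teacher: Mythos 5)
There is no internal proof to match here: the paper does not prove Lemma~\ref{greatest_value}, but imports it, stating that Formula~8.8 of \cite{goldstine-von_neumann} is an equivalent formulation. So your attempt is really a reconstruction of a classical result that the paper treats as a black box. Within that reconstruction, the scaffolding is sound: the coupling reduction to the square case $m=l$ (embedding $H$ as the top $m$ rows of an $l\times l$ Gaussian matrix $\tilde H$, so that $\sigma_{\max}(H)\le\sigma_{\max}(\tilde H)$ pointwise) is correct and does explain why $\pi_+$ depends only on $l$ and $\alpha$; the single-direction $\chi^2$ tail estimate is the right elementary ingredient; and your observation that an $\epsilon$-net argument cannot recover the stated constant in the regime where $\pi_+$ is only barely nonnegative is accurate.

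The genuine gap is that the step you yourself identify as the crux is never carried out. You name two candidate mechanisms --- the trace--Chernoff bound $\Prob[\lambda_{\max}(W)\ge t]\le\inf_{\theta>0}e^{-\theta t}\,\E[\mathop{\rm tr}e^{\theta W}]$ and the Laguerre-ensemble density --- and then assert that one of them ``should reproduce'' the rate $(2\alpha^2/e^{\alpha^2-1})^l$ and the prefactor $1/(4(\alpha^2-1)\sqrt{\pi l\alpha^2})$. Neither is routine. The eigenvalues of a real Wishart matrix are not independent $\chi^2$ variables, so $\E[\mathop{\rm tr}e^{\theta W}]$ has no elementary product form, and you have not verified that optimizing over $\theta$ yields the base $2\alpha^2e^{1-\alpha^2}$ rather than something weaker; likewise, extracting the tail from the joint Laguerre density requires controlling the Vandermonde factor $\prod_{i<j}|\lambda_i-\lambda_j|$ together with the Selberg normalization constants, and that bookkeeping is precisely where the factor you call ``morally $2^l$'' and the polynomial prefactor actually come from. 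Until one of these computations is executed down to the stated constants, the lemma is not proved: what you have is a correct reduction plus a plausible pointer toward the needed estimate, which in the end is not so different from what the paper itself does by deferring to \cite{goldstine-von_neumann}.
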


The following lemma provides a highly probable lower bound
on the least singular value
of a matrix whose entries are i.i.d.\ Gaussian
random variables of zero mean and unit variance;
Formula~2.5 in~\cite{chen-dongarra}
and the proof of Lemma~4.1 in~\cite{chen-dongarra}
together provide an equivalent formulation of Lemma~\ref{least_value}.

\begin{lemma}
\label{least_value}
Suppose that $l$ and $m$ are positive integers with $l \ge m$.
Suppose further that $H$ is an $m \times l$ matrix whose entries are
i.i.d.\ Gaussian random variables of zero mean and unit variance,
and $\beta$ is a positive real number, such that
\begin{equation}
\label{failure_prob2}
\pi_- = 1 - \frac{1}{\sqrt{2 \pi \, (l-m+1)}}
     \, \left( \frac{e}{(l-m+1) \, \beta} \right)^{l-m+1}
\end{equation}
is nonnegative.

Then, the least (that is, the $m^\ith$ greatest) singular value
of $H$ is no less than $1 / (\sqrt{l} \; \beta)$
with probability no less than $\pi_-$ defined in~(\ref{failure_prob2}).
\end{lemma}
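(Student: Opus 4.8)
The plan is to reduce the statement to a lower-tail bound on the smallest singular value and then to exploit the known distribution of the spectrum of a Gaussian matrix. Since the conclusion $\sigma_{\min}(H) \ge 1/(\sqrt l\,\beta)$ holds with probability at least $\pi_-$ precisely when its complement has probability at most $1 - \pi_-$, it suffices to establish
\[
\Prob\left[\sigma_{\min}(H) < \frac{1}{\sqrt l\,\beta}\right] \le \frac{1}{\sqrt{2\pi(l-m+1)}}\left(\frac{e}{(l-m+1)\,\beta}\right)^{l-m+1}.
\]
Passing to the Wishart matrix $W = H H^*$, whose eigenvalues are the squares of the singular values of $H$, this is the same as bounding $\Prob[\lambda_{\min}(W) < 1/(l\,\beta^2)]$, where $\lambda_{\min}(W)$ denotes the least eigenvalue of $W$.

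Before committing to the rigorous route, I would first extract the correct scaling from an elementary geometric argument, which also explains why the exponent $l-m+1$ appears. Writing $h_1, \dots, h_m$ for the rows of $H$ (independent standard Gaussian vectors in $\R^l$), one has $\sigma_{\min}(H) \ge \frac{1}{\sqrt m}\min_i {\rm dist}(h_i, V_i)$, where $V_i = {\rm span}(h_j : j \ne i)$ is an $(m-1)$-dimensional subspace independent of $h_i$; hence each squared distance ${\rm dist}(h_i, V_i)^2$ is a chi-square random variable with $l - m + 1$ degrees of freedom, and a union bound over $i$ already yields a tail of the stated shape up to a factor of order $m$. This confirms that the governing quantity is $\chi^2_{l-m+1}$ and that the lower-tail exponent is $l - m + 1$, but the extra factor of $m$ and the replacement of $\sqrt l$ by $\sqrt m$ show that this crude bound is not sharp enough to reproduce the clean constant in the lemma.

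To obtain the sharp bound I would use the exact eigenvalue density of $W$ (the real Laguerre, or Wishart, ensemble). The key inequality is the first-moment bound
\[
\Prob[\lambda_{\min}(W) < t] = \Prob[\text{at least one eigenvalue of } W \text{ lies in } (0,t)] \le \E\big[\#\{i : \lambda_i < t\}\big] = \int_0^t \rho_{l,m}(\lambda)\,d\lambda,
\]
where $\rho_{l,m}$ is the one-point correlation function (mean spectral density) of $W$. I would then insert the explicit Laguerre density and use that, as $\lambda \to 0^+$, $\rho_{l,m}(\lambda)$ is bounded above by a constant times $\lambda^{(l-m-1)/2}$; integrating from $0$ to $t = 1/(l\beta^2)$ produces a term proportional to $t^{(l-m+1)/2} = (l\beta^2)^{-(l-m+1)/2}$, that is, exactly $\beta^{-(l-m+1)}$ once the power of $\sqrt l$ is accounted for.

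The main obstacle is this last paragraph: turning the asymptotic $\lambda^{(l-m-1)/2}$ behavior into a rigorous upper bound on $\rho_{l,m}$ valid on the whole interval $(0,t)$, and then simplifying the resulting product of Gamma functions into the stated closed form. The Gamma-function constant is where the factor $\frac{1}{\sqrt{2\pi(l-m+1)}}\,(e/(l-m+1))^{l-m+1}$ originates: by Stirling's lower bound $k! \ge \sqrt{2\pi k}\,(k/e)^k$ with $k = l-m+1$, this factor is a clean over-estimate of $1/k!\,$, and the bulk of the work is verifying that the normalizing constants of the Laguerre ensemble collapse to precisely $1/(l-m+1)!$ times elementary factors. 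I would dispose of the boundary case, in which $\pi_-$ is forced to be nonnegative, by noting that the asserted bound is vacuous (a probability at most $1$) exactly when its right-hand side would exceed $1$, so no separate argument is needed there.
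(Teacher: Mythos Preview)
The paper itself does not prove this lemma: it is stated in the preliminaries section with a pointer to Formula~2.5 and the proof of Lemma~4.1 in Chen--Dongarra, where an equivalent formulation is established. So there is no in-paper argument to compare against; the relevant benchmark is the Chen--Dongarra derivation.

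Relative to that benchmark, your outline is in the right neighborhood but is, by your own admission, incomplete. Chen--Dongarra work directly with the probability density of the \emph{smallest} eigenvalue of the Wishart matrix $W=HH^*$ (obtained from the joint Laguerre density by integrating out the remaining $m-1$ eigenvalues over $[\lambda,\infty)^{m-1}$), bound that density pointwise, and integrate from $0$ to $t$; the Stirling step you identify is exactly how the factor $\frac{1}{\sqrt{2\pi(l-m+1)}}\,(e/((l-m+1)\beta))^{l-m+1}$ emerges as an upper bound for $1/(l-m+1)!$ times the natural $\beta^{-(l-m+1)}$. Your proposed route through the one-point correlation function $\rho_{l,m}$ and the first-moment inequality $\Prob[\lambda_{\min}<t]\le\int_0^t\rho_{l,m}$ gives the correct power of $\beta$ and the correct exponent $l-m+1$, but it is not obvious that it reproduces the \emph{exact} constant of the lemma: the one-point function carries contributions from all $m$ levels, not just the bottom one, and while level repulsion makes those contributions higher order as $t\to 0$, you would still have to show they do not spoil the numerical prefactor for finite $t=1/(l\beta^2)$. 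That is precisely the ``main obstacle'' you flag, and it is not dispatched in the proposal. If you want a self-contained argument matching the stated constant, it is cleaner to follow Chen--Dongarra and bound the smallest-eigenvalue density directly rather than the mean spectral density.
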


The following lemma provides a highly probable upper bound
on the condition number
of a matrix whose entries are i.i.d.\ Gaussian
random variables of zero mean and unit variance;
the lemma follows immediately from Lemmas~\ref{greatest_value}
and~\ref{least_value}.
For simpler bounds, see~\cite{chen-dongarra}.

\begin{lemma}
\label{rand_lemma}
Suppose that $l$ and $m$ are positive integers with $l \ge m$.
Suppose further that $H$ is an $m \times l$ matrix whose entries are
i.i.d.\ Gaussian random variables of zero mean and unit variance,
and $\alpha$ and $\beta$ are positive real numbers, such that
$\alpha > 1$ and
\begin{equation}
\label{failure_prob3}
\pi_0 = 1 - \frac{1}{4 \, (\alpha^2-1) \, \sqrt{\pi l \alpha^2}}
       \left( \frac{2 \alpha^2}{e^{\alpha^2-1}} \right)^l
     - \frac{1}{\sqrt{2 \pi \, (l-m+1)}}
    \, \left( \frac{e}{(l-m+1) \, \beta} \right)^{l-m+1}
\end{equation}
is nonnegative.

Then, the condition number of $H$ is no greater than
$\sqrt{2} \, l \alpha \beta$
with probability no less than $\pi_0$ defined in~(\ref{failure_prob3}).
\end{lemma}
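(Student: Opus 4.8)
The plan is to derive Lemma~\ref{rand_lemma} directly from the two preceding lemmas via a union bound, exactly as the preamble to the statement announces (``the lemma follows immediately from Lemmas~\ref{greatest_value} and~\ref{least_value}''). The condition number of $H$ is the ratio $\sigma_1(H)/\sigma_m(H)$ of its greatest to its least singular value, so the strategy is to bound the numerator from above and the denominator from below, each on its own high-probability event, and then intersect the two events.

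First I would invoke Lemma~\ref{greatest_value} to assert that the greatest singular value $\sigma_1(H)$ is at most $\sqrt{2l}\,\alpha$ except on a ``bad'' event of probability at most
\begin{equation}
\frac{1}{4 \, (\alpha^2-1) \, \sqrt{\pi l \alpha^2}}
     \left( \frac{2 \alpha^2}{e^{\alpha^2-1}} \right)^l.
\end{equation}
Next I would invoke Lemma~\ref{least_value} to assert that the least singular value $\sigma_m(H)$ is at least $1/(\sqrt{l}\,\beta)$ except on a second bad event of probability at most
\begin{equation}
\frac{1}{\sqrt{2 \pi \, (l-m+1)}}
     \, \left( \frac{e}{(l-m+1) \, \beta} \right)^{l-m+1}.
\end{equation}
On the complement of the union of these two bad events, both bounds hold simultaneously, so the condition number satisfies
\begin{equation}
\frac{\sigma_1(H)}{\sigma_m(H)}
  \le \frac{\sqrt{2l}\,\alpha}{1/(\sqrt{l}\,\beta)}
   = \sqrt{2}\,l\,\alpha\beta,
\end{equation}
which is precisely the claimed bound.

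The probability bookkeeping is the only thing that requires care, and it is routine: by the union bound the probability of the combined bad event is at most the sum of the two individual failure probabilities, so the probability that \emph{both} singular-value estimates hold is at least $1$ minus that sum, which is exactly $\pi_0$ as defined in~(\ref{failure_prob3}). Note that $\pi_0 = \pi_+ + \pi_- - 1$ in terms of the quantities from the earlier lemmas, so the hypothesis that $\pi_0$ is nonnegative automatically forces each of $\pi_+$ and $\pi_-$ into the regime where the earlier lemmas apply meaningfully, and no separate verification of their hypotheses is needed beyond $\alpha>1$, which is assumed.

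There is no real obstacle here; the one point worth stating explicitly is that the two events are handled by a union bound rather than by any independence argument, since the greatest and least singular values of the same matrix $H$ are certainly not independent. The union bound sidesteps this entirely, because it needs no independence, and it is what produces the additive structure of $\pi_0$. The multiplication of the two singular-value bounds to obtain $\sqrt{2}\,l\,\alpha\beta$ is then immediate from the definition of the condition number.
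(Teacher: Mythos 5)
Your proof is correct and matches the paper's intent exactly: the paper gives no separate proof of this lemma, stating only that it ``follows immediately'' from Lemmas~\ref{greatest_value} and~\ref{least_value}, and the union-bound argument you spell out (upper bound on $\sigma_1$, lower bound on $\sigma_m$, intersect the two events, multiply the bounds to get $\sqrt{2}\,l\alpha\beta$) is precisely that immediate deduction.
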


\subsection{Preconditioning}
\label{preconditioning}

The following lemma, proven in a slightly different form
as Theorem~1 in~\cite{rokhlin-tygert}, states that,
given a short, fat $m \times n$ matrix $A$, the condition number
of a certain preconditioned version of $A$ corresponding
to an $n \times l$ matrix $G$ is equal to the condition number
of $V^* \, G$, where $V$ is an $n \times m$ matrix of orthonormal
right singular vectors of $A$.

\begin{lemma}
\label{cond_lemma}
Suppose that $l$, $m$, and $n$ are positive integers such that $m \le l \le n$.
Suppose further that $A$ is a full-rank $m \times n$ matrix,
and that the SVD of $A$ is
\begin{equation}
\label{original_SVD}
A_{m \times n} = U_{m \times m} \, \Sigma_{m \times m} \, (V_{n \times m})^*,
\end{equation}
where the columns of $U$ are orthonormal, the columns of $V$ are orthonormal,
and $\Sigma$ is a diagonal matrix whose entries are all nonnegative.
Suppose in addition that $G$ is an $n \times l$ matrix such that
the $m \times l$ matrix $A \, G$ has full rank.

Then, there exist an $m \times m$ matrix $P$,
and an $l \times m$ matrix $Q$ whose columns are orthonormal, such that
\begin{equation}
\label{QR_decomposition}
A_{m \times n} \, G_{n \times l} = P_{m \times m} \, (Q_{l \times m})^*.
\end{equation}

Furthermore, the condition numbers of $P^{-1} \, A$ and $V^* \, G$ are equal,
for any $m \times m$ matrix~$P$, and $l \times m$ matrix $Q$
whose columns are orthonormal,
such that $P$ and $Q$ satisfy~(\ref{QR_decomposition}).
\end{lemma}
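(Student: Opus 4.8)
The plan is to treat the two assertions of the lemma separately: first the existence of a factorization of the form~(\ref{QR_decomposition}), and then the equality of the two condition numbers for any such factorization. The existence is immediate from the standard QR decomposition. Since $m \le l$ and $A \, G$ has full rank $m$, the $l \times m$ matrix $(A \, G)^*$ has full column rank, so it admits a QR decomposition $(A \, G)^* = Q \, R$, where $Q$ is an $l \times m$ matrix whose columns are orthonormal and $R$ is an $m \times m$ nonsingular upper-triangular matrix. Taking adjoints and setting $P = R^*$ gives $A \, G = P \, Q^*$ with $P$ nonsingular, establishing~(\ref{QR_decomposition}).

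For the equality of condition numbers, the central device is the $m \times m$ matrix $B = P^{-1} \, U \, \Sigma$, which I would use to capture both $P^{-1} \, A$ and $V^* \, G$ at once. Using the SVD~(\ref{original_SVD}), I would first write $P^{-1} \, A = P^{-1} \, U \, \Sigma \, V^* = B \, V^*$. Because the columns of $V$ are orthonormal (so $V^* V = I$), I have $(B \, V^*)(B \, V^*)^* = B \, B^*$, and therefore $P^{-1} \, A$ and $B$ share the same list of singular values, hence the same condition number. Here $B$ is nonsingular: $P$ and $U$ are nonsingular, and the full rank of $A$ forces $\Sigma$ to be nonsingular.

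Next I would feed the factorization into this same $B$. Left-multiplying~(\ref{QR_decomposition}) by $P^{-1}$ gives $P^{-1} \, A \, G = Q^*$, that is, $B \, (V^* \, G) = Q^*$. Writing $W = V^* \, G$ and using that the columns of $Q$ are orthonormal (so $Q^* Q = I$), I would form $(B \, W)(B \, W)^* = Q^* \, Q = I$, whence $W \, W^* = (B^* \, B)^{-1}$. Thus the squared singular values of $W = V^* \, G$ are exactly the reciprocals of the squared singular values of $B$, listed in reverse order. Reciprocating and reversing a list of positive singular values leaves the ratio of largest to smallest unchanged, so $V^* \, G$ and $B$ have the same condition number. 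Chaining the two equalities yields that the condition number of $P^{-1} \, A$ equals that of $B$, which equals that of $V^* \, G$, as claimed.

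The only genuinely substantive step is the identity $W \, W^* = (B^* \, B)^{-1}$ together with the observation that inverting and reversing a set of singular values preserves the condition number; the rest is bookkeeping that exploits the orthonormality of the columns of $V$ and of $Q$. The main points to be careful about are that the argument must hold for an \emph{arbitrary} pair $P, Q$ satisfying~(\ref{QR_decomposition}), not merely the one produced by QR, and that all the relevant inverses are well defined. Both are handled by a rank count: since $A \, G$ has rank $m$ and $Q^*$ has rank $m$, any $P$ with $A \, G = P \, Q^*$ must itself be nonsingular, so $B$ and $(B^* \, B)^{-1}$ exist and the quantity the lemma asserts to be invariant is indeed independent of the choice of $P$ and $Q$.
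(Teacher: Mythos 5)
Your proof is correct. Note that the paper itself does not prove Lemma~\ref{cond_lemma}; it only cites Theorem~1 of the earlier Rokhlin--Tygert least-squares paper (``proven in a slightly different form''), so there is no in-text argument to match against. Judged on its own, your argument is sound and complete: the existence part via QR of $(A\,G)^*$ is standard; the rank count showing that \emph{any} admissible $P$ is nonsingular is exactly the point that needs to be made for the ``for any $P$, $Q$'' clause; and the two Gram-matrix computations $(B V^*)(B V^*)^* = B B^*$ and $B\,W\,W^*\,B^* = Q^*Q = I$, giving $W W^* = (B^*B)^{-1}$, correctly reduce everything to the observation that inverting and reversing the list of singular values of the nonsingular matrix $B$ preserves the ratio of largest to smallest. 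The cited proof proceeds instead by writing out an SVD of $V^*G$ and propagating it through the factorization to read off the singular values of $P^{-1}A$ explicitly; your route through the single bridge matrix $B = P^{-1}U\Sigma$ avoids introducing a second SVD and is, if anything, tidier. One tiny presentational point: you might state explicitly that the $m$ singular values of the $m \times n$ matrix $P^{-1}A$ (respectively the $m \times l$ matrix $V^*G$) are, in the paper's convention, the square roots of the eigenvalues of the corresponding $m \times m$ Gram matrix, which is what licenses comparing them with the singular values of the square matrix $B$.
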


\section{Mathematical apparatus}
\label{apparatus}

In this section, we describe a randomized method for preconditioning
rectangular matrices and prove that it succeeds with very high probability.

The following theorem states that a certain preconditioned version
$P^{-1} \, A$ of a short, fat matrix $A$ is well-conditioned
with very high probability (see Observation~\ref{high_prob}
for explicit numerical bounds).

\begin{theorem}
\label{the_theorem}
Suppose that $l$, $m$, and $n$ are positive integers such that $m \le l \le n$.
Suppose further that $A$ is a full-rank $m \times n$ matrix,
and that the SVD of $A$ is
\begin{equation}
\label{original_SVD2}
A_{m \times n} = U_{m \times m} \, \Sigma_{m \times m} \, (V_{n \times m})^*,
\end{equation}
where the columns of $U$ are orthonormal, the columns of $V$ are orthonormal,
and $\Sigma$ is a diagonal matrix whose entries are all nonnegative.
Suppose in addition that $G$ is an $n \times l$ matrix 
whose entries are i.i.d.\ Gaussian random variables
of zero mean and unit variance.
Suppose finally that $\alpha$ and $\beta$ are positive real numbers, such that
$\alpha > 1$ and $\pi_0$ defined in~(\ref{failure_prob3}) is nonnegative.

Then, there exist an $m \times m$ matrix $P$,
and an $l \times m$ matrix $Q$ whose columns are orthonormal, such that
\begin{equation}
\label{QR_decomposition2}
A_{m \times n} \, G_{n \times l} = P_{m \times m} \, (Q_{l \times m})^*.
\end{equation}

Furthermore, the condition number of $P^{-1} \, A$
is no greater than $\sqrt{2} \, l \alpha \beta$ with probability
no less than $\pi_0$ defined in~(\ref{failure_prob3}),
for any $m \times m$ matrix $P$, and $l \times m$ matrix $Q$
whose columns are orthonormal,
such that $P$ and $Q$ satisfy~(\ref{QR_decomposition2}).
\end{theorem}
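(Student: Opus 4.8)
The plan is to deduce Theorem~\ref{the_theorem} almost entirely from the results already assembled in Section~\ref{prelims}, since the theorem is essentially a fusion of the deterministic preconditioning identity (Lemma~\ref{cond_lemma}) with the random-matrix condition-number bound (Lemma~\ref{rand_lemma}). First I would observe that the existence of $P$ and $Q$ satisfying~(\ref{QR_decomposition2}), together with the equality of the condition numbers of $P^{-1} \, A$ and $V^* \, G$, is exactly the content of Lemma~\ref{cond_lemma}; the hypotheses there require only that $A$ be full-rank $m \times n$ with $m \le l \le n$ and that $A \, G$ have full rank. So the first task is to confirm that $A \, G$ has full rank with probability one. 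Since $A$ has full rank $m$, the map $G \mapsto A \, G$ sends the random Gaussian $G$ to a matrix whose rank equals $m$ unless $G$ falls into a proper algebraic subvariety (the locus where $A \, G$ is rank-deficient), which has Lebesgue measure zero; hence $A \, G$ has full rank almost surely, and Lemma~\ref{cond_lemma} applies.

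Given that reduction, the condition number of $P^{-1} \, A$ equals the condition number of $V^* \, G$, so it suffices to bound the condition number of $V^* \, G$. The key step is to identify the distribution of $V^* \, G$. Here $V$ is the $n \times m$ matrix of orthonormal right singular vectors of $A$, so $V^* \, V = I_m$, and $G$ is $n \times l$ with i.i.d.\ standard Gaussian entries. The crucial fact is that the standard Gaussian distribution on $\R^n$ is invariant under orthogonal transformations, so that applying the partial isometry $V^*$ to each column of $G$ yields a matrix $H = V^* \, G$ that is $m \times l$ with i.i.d.\ standard Gaussian entries. I would verify this by computing the covariance: each column of $H$ is $V^*$ applied to an independent $N(0, I_n)$ vector, giving an $N(0, V^* \, V) = N(0, I_m)$ vector, and the columns remain independent because the columns of $G$ are; thus the entries of $H$ are jointly Gaussian, mean zero, and uncorrelated, hence i.i.d.\ standard Gaussian.

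Once $H = V^* \, G$ is recognized as an $m \times l$ matrix with i.i.d.\ standard Gaussian entries and $l \ge m$, Lemma~\ref{rand_lemma} applies directly and asserts that the condition number of $H$ is no greater than $\sqrt{2} \, l \alpha \beta$ with probability no less than $\pi_0$ from~(\ref{failure_prob3}), under the stated hypotheses that $\alpha > 1$ and $\pi_0$ is nonnegative. Chaining the equalities, the condition number of $P^{-1} \, A$ equals that of $V^* \, G = H$, which is at most $\sqrt{2} \, l \alpha \beta$ with probability at least $\pi_0$, and this holds for every valid choice of $P$ and $Q$ because Lemma~\ref{cond_lemma} gives the condition-number equality uniformly over all such pairs. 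That completes the proof.

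I expect the main obstacle to be the orthogonal-invariance step, namely establishing cleanly that $V^* \, G$ is itself a Gaussian matrix with i.i.d.\ standard-normal entries despite $V$ being only a partial isometry ($n \times m$ with $n > m$) rather than a full orthogonal matrix. The subtlety is that $V^*$ reduces dimension, so one cannot simply invoke rotational invariance of a full Gaussian vector; the correct argument is the covariance computation $\E[(V^* g)(V^* g)^*] = V^* \, \E[g \, g^*] \, V = V^* \, I_n \, V = I_m$ together with the preservation of joint Gaussianity under linear maps. Everything else is a routine matter of quoting the prior lemmas and tracking that their hypotheses are met.
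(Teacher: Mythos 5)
Your proposal is correct and follows essentially the same route as the paper: observe that $H = V^*\,G$ has i.i.d.\ standard Gaussian entries by orthogonal invariance, then combine Lemma~\ref{rand_lemma} with Lemma~\ref{cond_lemma}. You actually spell out two points the paper's one-line proof leaves implicit (the covariance computation showing $V^*G$ is i.i.d.\ Gaussian despite $V^*$ being only a partial isometry, and the almost-sure full rank of $A\,G$ needed to invoke Lemma~\ref{cond_lemma}), which is a welcome bit of extra care rather than a deviation.
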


\begin{proof}
Combining the facts that the columns of $V$ are orthonormal
and that the entries of $G$ are i.i.d.\ Gaussian random variables
of zero mean and unit variance yields that the entries
of $H_{m \times l} = (V_{n \times m})^* \, G_{n \times l}$
are also i.i.d.\ Gaussian random variables of zero mean and unit variance.
Combining this latter fact and Lemmas~\ref{rand_lemma} and~\ref{cond_lemma}
completes the proof.
\end{proof}

\begin{observation}
\label{high_prob}
To elucidate the bounds given in Theorem~\ref{the_theorem},
we look at some special cases.
With $m \ge 2$ and $\alpha \ge 2$, we find that
$\pi_0$ from~Theorem~\ref{the_theorem}, defined in~(\ref{failure_prob3}),
satisfies
\begin{multline}
\label{simple_bound}
\pi_0 \ge 1 - \frac{1}{4 \, (\alpha^2-1) \, \sqrt{\pi (l-m+2) \alpha^2}}
              \left( \frac{2 \alpha^2}{e^{\alpha^2-1}} \right)^{l-m+2} \\
            - \frac{1}{\sqrt{2 \pi \, (l-m+1)}}
           \, \left( \frac{e}{(l-m+1) \, \beta} \right)^{l-m+1}.
\end{multline}
Combining~(\ref{simple_bound}) and Theorem~\ref{the_theorem} yields
strong bounds on the condition number of $P^{-1} \, A$ when $l-m = 4$
and $m \ge 2$:
With $l-m = 4$, $\alpha^2 = 4$, and $\beta = 3$,
the condition number is at most $10 l$ with probability at least $1-10^{-4}$.
With $l-m = 4$, $\alpha^2 = 7$, and $\beta = 26$,
the condition number is at most $100 l$ with probability at least $1-10^{-9}$.
With $l-m = 4$, $\alpha^2 = 9$, and $\beta = 250$,
the condition number is at most $1100l$ with probability at least $1-10^{-14}$.
Moreover, if instead of $l-m = 4$ we take $l$ to be a few times $m$,
then the condition number of $P^{-1} \, A$ is no greater than a small constant
(that does not depend on $l$), with very high probability
(see~\cite{chen-dongarra}).
\end{observation}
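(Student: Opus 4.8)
The plan is to treat the two assertions of the observation separately: first establishing the uniform lower bound~(\ref{simple_bound}) on $\pi_0$, and then verifying the three numerical special cases by direct substitution.

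For~(\ref{simple_bound}), I would begin by noting that the right-hand side of~(\ref{simple_bound}) differs from the definition~(\ref{failure_prob3}) of $\pi_0$ only in the \emph{first} subtracted term: the factor $l$ appearing in~(\ref{failure_prob3}), both inside the square root and as the exponent, is replaced by $l-m+2$. The $\beta$-dependent term is already identical in the two expressions, so the whole claim reduces to showing that
\[
\frac{1}{\sqrt{l-m+2}} \left( \frac{2\alpha^2}{e^{\alpha^2-1}} \right)^{l-m+2}
\;\ge\;
\frac{1}{\sqrt{l}} \left( \frac{2\alpha^2}{e^{\alpha^2-1}} \right)^{l},
\]
since replacing the first subtracted term by a larger one produces a smaller (hence lower-bounding) value of the full expression. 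Writing $r = 2\alpha^2/e^{\alpha^2-1}$, I would first check that $\alpha \ge 2$ forces $r < 1$: setting $t = \alpha^2 \ge 4$ and comparing logarithms, the inequality $r<1$ is equivalent to $\ln(2t) < t-1$, which holds at $t=4$ (where $\ln 8 < 3$) and whose gap $t-1-\ln(2t)$ has positive derivative $1 - 1/t$ for $t > 1$, so it persists for all $t \ge 4$. Given $r < 1$ and $m \ge 2$ (so that $l-m+2 \le l$), both $r^{k}$ and $1/\sqrt{k}$ are decreasing in $k$; each factor on the left therefore exceeds its counterpart on the right, and multiplying the two inequalities between nonnegative quantities yields the displayed bound, and hence~(\ref{simple_bound}).

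The payoff of this substitution is that, for a fixed oversampling $l-m$, the first term of~(\ref{simple_bound}) no longer depends on $l$ or $m$ individually—it depends only on $l-m+2$—while the $\beta$-term likewise depends only on $l-m+1$. Thus the entire failure probability $1-\pi_0$ is bounded by a quantity depending solely on $l-m$, $\alpha$, and $\beta$, \emph{uniformly} over all admissible $m \ge 2$. For the three special cases I would then fix $l-m=4$ (so $l-m+2=6$ and $l-m+1=5$), substitute each triple $(\alpha^2,\beta)$ into the two terms of~(\ref{simple_bound}), and verify arithmetically that their sum falls below $10^{-4}$, $10^{-9}$, and $10^{-14}$, respectively. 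For the condition-number side I would separately confirm that the bound $\sqrt{2}\,l\alpha\beta$ furnished by Theorem~\ref{the_theorem} is dominated by the stated round number in each case, namely $6\sqrt{2}\,l \le 10\,l$, then $26\sqrt{14}\,l \le 100\,l$, and finally $750\sqrt{2}\,l \le 1100\,l$.

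I expect the only genuinely non-routine step to be the monotonicity argument underpinning~(\ref{simple_bound})—in particular, pinning down that $\alpha \ge 2$ is exactly what guarantees $r < 1$ and that $m \ge 2$ is exactly what guarantees $l-m+2 \le l$, so that both hypotheses of the observation are used and the inequality points in the intended direction. Once~(\ref{simple_bound}) is in hand, the three special cases amount to evaluating the two closed-form terms at the given constants and checking the resulting numerical inequalities, which I would carry out but not belabor here.
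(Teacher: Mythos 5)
Your proposal is correct and follows essentially the argument the paper leaves implicit (the Observation is stated without proof): the bound~(\ref{simple_bound}) rests exactly on replacing $l$ by $l-m+2$ in the first subtracted term of~(\ref{failure_prob3}), justified by the monotonicity of $r^x/\sqrt{x}$ with $r = 2\alpha^2/e^{\alpha^2-1} < 1$ for $\alpha \ge 2$ together with $l-m+2 \le l$ for $m \ge 2$, after which the special cases are pure arithmetic. Your numerical checks also pan out: the two terms of~(\ref{simple_bound}) at $l-m=4$ sum to roughly $7\cdot 10^{-5}$, $7\cdot 10^{-10}$, and $9\cdot 10^{-15}$ for the three parameter choices, and the condition-number bounds $6\sqrt{2}\,l \le 10\,l$, $26\sqrt{14}\,l \le 100\,l$, and $750\sqrt{2}\,l \le 1100\,l$ all hold as you state.
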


\section{Description of the algorithm}
\label{algorithm}

In this section, we describe the algorithm of the present paper in some detail.
We tabulate its computational costs in Subsection~\ref{costs}.

Suppose that $m$ and $n$ are positive integers with $m \le n$,
and $A$ is a full-rank $m \times n$ matrix.
The orthogonal projection of a vector $b_{n \times 1}$
onto the row space of $A_{m \times n}$ is $A^* \, (A \, A^*)^{-1} \, A \, b$.
Therefore, the orthogonal projection of $b_{n \times 1}$ onto the null space
of $A_{m \times n}$ is $b - A^* \, (A \, A^*)^{-1} \, A \, b$.
After constructing a matrix $P_{m \times m}$ such that the condition number
of $P^{-1} \, A$ is not too large,
we may compute the orthogonal projection onto the null space of $A$
via the identity
\begin{equation}
\label{basic_precond}
b - A^* \, (A \, A^*)^{-1} \, A \, b
= b - A^* \, \left(P^*)^{-1} \, (P^{-1} \, A \, A^* \, (P^*)^{-1}\right)^{-1}
   \, P^{-1} \, A \, b.
\end{equation}
With such a matrix $P_{m \times m}$,
(\ref{basic_precond}) provides a numerically stable means
for computing the orthogonal projection of $b$ onto the null space of $A$.

To construct a matrix $P_{m \times m}$ such that the condition number
of $P^{-1} \, A$ is not too large, we choose a positive integer $l$
such that $m \le l \le n$ ($l = m+4$ is often a good choice)
and perform the following two steps:
\begin{enumerate}
\item Construct the $m \times l$ matrix
\begin{equation}
\label{testing}
S_{m \times l} = A_{m \times n} \, G_{n \times l}
\end{equation}
one column at a time, where $G$ is a matrix whose entries
are i.i.d. random variables. Specifically, generate each column of $G$
in succession, applying $A$ to the column (and saving the result)
before generating the next column.
(Notice that we do not need to store all $nl$ entries of $G$ simultaneously.)
\item Construct a pivoted $QR$ decomposition of $S^*$,
\begin{equation}
\label{QR_decomp}
(S_{m \times l})^* = Q_{l \times m} \, R_{m \times m} \, \Pi_{m \times m},
\end{equation}
where the columns of $Q$ are orthonormal, $R$ is upper-triangular,
and $\Pi$ is a permutation matrix.
(See, for example, Chapter~5 of~\cite{golub-van_loan}
for details on computing the pivoted $QR$ decomposition.)
\end{enumerate}
With $P = \Pi^* \, R^*$,
Theorem~\ref{the_theorem} and Observation~\ref{high_prob} show that
the condition number of $P^{-1} \, A$ is not too large,
with very high probability.

To construct the matrix
\begin{equation}
\label{square_inv}
Y = \left(P^{-1} \, A \, A^* \, (P^*)^{-1}\right)^{-1}
\end{equation}
which appears in the right-hand side of~(\ref{basic_precond}),
we perform the following three steps:
\begin{enumerate}
\item Construct the $m \times m$ matrix
\begin{equation}
\label{scary}
P^-_{m \times m} = \Pi_{m \times m}^* \, R_{m \times m}^{-1}
\end{equation}
(recalling that $\Pi$ is a permutation matrix).
\item For each $k = 1$,~$2$, \dots, $m-1$,~$m$,
construct the $k^\th$ column $x^{(k)}$ of an $m \times m$ matrix $X$
via the following five steps:
\begin{enumerate}
\item[a.] Extract the $k^\th$ column $c^{(k)}_{m \times 1}$ of $P^-$.
\item[b.] Construct
$d^{(k)}_{n \times 1} = (A_{m \times n})^* \, c^{(k)}_{m \times 1}$.
\item[c.] Construct
$e^{(k)}_{m \times 1} = A_{m \times n} \, d^{(k)}_{n \times 1}$.
\item[d.] Construct
$f^{(k)}_{m \times 1} = \Pi_{m \times m} \, e^{(k)}_{m \times 1}$
(recalling that $\Pi$ is a permutation matrix).
\item[e.] Solve
$R_{m \times m}^* \, x^{(k)}_{m \times 1} = f^{(k)}_{m \times 1}$
for $x^{(k)}_{m \times 1}$ (recalling that $R$ is an upper-triangular matrix).
\end{enumerate}
Notice that $X = P^{-1} \, A \, A^* \, (P^*)^{-1}$,
where $P = (R \, \Pi)^*$ and $(P^*)^{-1} = P^-$.
Also, because we generate $X$ one column at a time,
we do not need to store an extra $\bigoh(nm)$ floating-point words of memory
at any stage of the algorithm.
\item Construct the $m \times m$ matrix
\begin{equation}
\label{inversion}
Y_{m \times m} = X_{m \times m}^{-1}.
\end{equation}
\end{enumerate}

It follows from~(\ref{basic_precond}) that the orthogonal projection of $b$
onto the null space of $A$ is
\begin{equation}
b - A^* \, (A \, A^*)^{-1} \, A \, b
= b - A^* \, (P^*)^{-1} \, Y \, P^{-1} \, A \, b.
\end{equation}

In order to apply $A^* \, (P^*)^{-1} \, Y \, P^{-1} \, A$
to a vector $b_{n \times 1}$, where $P = \Pi^* \, R^*$,
we perform the following seven steps:

\begin{enumerate}
\item Construct $c_{m \times 1} = A_{m \times n} \, b_{n \times 1}$.
\item Construct $d_{m \times 1} = \Pi_{m \times m} \, c_{m \times 1}$
(recalling that $\Pi$ is a permutation matrix).
\item Solve $R_{m \times m}^* \, e_{m \times 1} = d_{m \times 1}$
for $e_{m \times 1}$ (recalling that $R$ is an upper-triangular matrix).
\item Construct $f_{m \times 1} = Y_{m \times m} \, e_{m \times 1}$.
\item Solve $R_{m \times m} \, g_{m \times 1} = f_{m \times 1}$
for $g_{m \times 1}$ (recalling that $R$ is an upper-triangular matrix).
\item Construct $h_{m \times 1} = \Pi^*_{m \times m} \, g_{m \times 1}$
(recalling that $\Pi$ is a permutation matrix).
\item Construct
$\tilde{b}_{n \times 1} = (A_{m \times n})^* \, h_{m \times 1}$.
\end{enumerate}

The output $\tilde{b} = A^* \, (P^*)^{-1} \, Y \, P^{-1} \, A \, b$
of the above procedure is the orthogonal projection of $b$
onto the row space of $A$. The orthogonal projection of $b$ onto the null space
of $A$ is then $b-\tilde{b}$.

\begin{remark}
\label{least-squares}
The vector $h_{m \times 1}$ constructed in Step~6 above
minimizes the Euclidean norm
$\| (A_{m \times n})^* \, h_{m \times 1} - b_{n \times 1} \|$
of $(A_{m \times n})^* \, h_{m \times 1} - b_{n \times 1}$; that is,
$h_{m \times 1}$ solves the overdetermined linear least-squares regression
$(A_{m \times n})^* \, h_{m \times 1} \approx b_{n \times 1}$.
\end{remark}

\begin{remark}
\label{stability}
The algorithm of the present section is numerically stable provided
that the condition number of $P^{-1} \, A$ is not too large.
Theorem~\ref{the_theorem} and Observation~\ref{high_prob} show that
the condition number is not too large, with very high probability.
\end{remark}

\subsection{Costs}
\label{costs}

In this subsection, we estimate the number of floating-point operations
required by the algorithm of the present section.

We denote by $C_A$ the cost of applying $A_{m \times n}$
to an $n \times 1$ vector;
we denote by $C_{A^*}$ the cost of applying $(A_{m \times n})^*$
to an $m \times 1$ vector.
We will be keeping in mind our assumption that $m \le l \le n$.

Constructing a matrix $P_{m \times m}$ such that the condition number
of $P^{-1} \, A$ is not too large incurs the following costs:
\begin{enumerate}
\item Constructing $S$ defined in~(\ref{testing}) costs
$l \cdot (C_A + \bigoh(n))$.
\item Constructing $R$ and $\Pi$ satisfying~(\ref{QR_decomp})
costs $\bigoh(l \cdot m^2)$.
\end{enumerate}

Given $R$ and $\Pi$ (whose product is $P^*$),
constructing the matrix $Y_{m \times m}$ defined in~(\ref{square_inv})
incurs the following costs:
\begin{enumerate}
\item Constructing $P^-$ defined in~(\ref{scary}) costs $\bigoh(m^3)$.
\item Constructing $X$ via the five-step procedure (Steps~a--e) costs
$m \cdot (C_{A^*} + C_A + \bigoh(m^2))$.
\item Constructing $Y$ in~(\ref{inversion}) costs $\bigoh(m^3)$.
\end{enumerate}

Summing up, we see from $m \le l \le n$ that constructing $R$, $\Pi$, and $Y$
defined in~(\ref{square_inv}) costs
\begin{equation}
C_{\rm pre} = (l+m) \cdot C_A + m \cdot C_{A^*} + \bigoh(l \cdot (m^2+n))
\end{equation}
floating-point operations overall,
where $C_A$ is the cost of applying $A_{m \times n}$
to an $n \times 1$ vector,
and $C_{A^*}$ is the cost of applying $(A_{m \times n})^*$
to an $m \times 1$ vector ($l = m+4$ is often a good choice).
Given $R$ and $\Pi$ (whose product is $P^*$) and $Y$,
computing the orthogonal projection $b-\tilde{b}$ of a vector $b$
onto the null space of $A$ via the seven-step procedure above costs
\begin{equation}
C_{\rm pro} = C_A + \bigoh(m^2) + C_{A^*}
\end{equation}
floating-point operations,
where again $C_A$ is the cost of applying $A_{m \times n}$
to an $n \times 1$ vector,
and $C_{A^*}$ is the cost of applying $(A_{m \times n})^*$
to an $m \times 1$ vector.

\begin{remark}
We may use iterative refinement to improve the accuracy of $h$
discussed in Remark~\ref{least-squares},
as in Section~5.7.3 of~\cite{dahlquist-bjorck}.
Each additional iteration of improvement costs $C_A + \bigoh(m^2) + C_{A^*}$
(since $R$, $\Pi$, and $Y$ are already available).
We have also found that reprojecting the computed projection
onto the null space often increases the accuracy
to which $A$ annihilates the projection.
However, this extra accuracy is not necessarily meaningful;
see Remark~\ref{perturbation} below.
\end{remark}

\section{Numerical results}
\label{numerical}

In this section, we illustrate the performance of the algorithm
of the present paper via several numerical examples.

Tables~\ref{big_time}--\ref{big_err_norm} display the results
of applying the algorithm to project onto the null space
of the sparse matrix $A_{m \times n}$ defined as follows.
First, we define a circulant matrix $B_{m \times m}$ via the formula
\begin{equation}
\label{pert_lap}
B_{j,k} = \left\{ \begin{array}{rl}
                    1, & j = k-2 \mod m \\
                   -4, & j = k-1 \mod m \\
                  6+d, & j = k          \\
                   -4, & j = k+1 \mod m \\
                    1, & j = k+2 \mod m \\
                    0, & \hbox{otherwise}
                  \end{array} \right.
\end{equation}
for $j,k = 1$,~$2$, \dots, $m-1$,~$m$,
where $d$ is a real number that we will set shortly.
Taking $n$ to be a positive integer multiple of $m$,
we define the matrix $A_{m \times n}$ via the formula
\begin{equation}
\label{test_mat}
A_{m \times n} = U_{m \times m}
\cdot \left( \begin{array}{c|c|c|c|c} B_{m \times m} & B_{m \times m} &
\cdots & B_{m \times m} & B_{m \times m} \end{array} \right)_{m \times n}
\cdot V_{n \times n},
\end{equation}
where $U_{m \times m}$ and $V_{n \times n}$ are drawn uniformly
at random from the sets of $m \times m$ and $n \times n$ permutation matrices,
and $B_{m \times m}$ is defined in~(\ref{pert_lap}).
The condition number of $B$ is easily calculated to be $\kappa = (16+d)/d$;
therefore, the condition number of $A$ is also $\kappa = (16+d)/d$.
In accordance with this, we chose $d = 16/(\kappa-1)$.

For Tables~\ref{big_time} and~\ref{big_err},
we took the entries of $G$ in~(\ref{testing}) to be
(Mitchell-Moore-Brent-Knuth) lagged Fibonacci pseudorandom sequences,
uniformly distributed on $[-1,1]$,
constructed using solely floating-point additions and subtractions
(with no integer arithmetic);
see, for example, Section~7.1.5 of~\cite{press-teukolsky-vetterling-flannery}.
For Tables~\ref{big_time_norm} and~\ref{big_err_norm},
we took the entries of $G$ in~(\ref{testing}) to be
high-quality pseudorandom numbers drawn from a Gaussian distribution
of zero mean and unit variance.
Though we used Gaussian variates in previous sections
in order to simplify the theoretical analysis,
there appears to be no practical advantage to using high-quality
truly Gaussian pseudorandom numbers;
as reflected in the tables, the very quickly generated
lagged Fibonacci sequences perform just as well in our algorithm.

Tables~\ref{low_time} and~\ref{low_err} display the results
of applying the algorithm to project onto the null space
of the dense matrix $\tilde{A}_{m \times n}$ defined via the formula
\begin{equation}
\label{modified}
\tilde{A}_{m \times n}
= A_{m \times n} + E_{m \times 10} \cdot F_{10 \times n} / \sqrt{mn},
\end{equation}
where $A_{m \times n}$ is the sparse matrix defined in~(\ref{test_mat}),
and the entries of $E_{m \times 10}$ and $F_{10 \times n}$
are i.i.d. Gaussian random variables of zero mean and unit variance.
As for Tables~\ref{big_time}--\ref{big_err_norm},
we chose $d = 16/(\kappa-1)$ for use in~(\ref{pert_lap});
with this choice, $\kappa$ is the condition number of $A$ in~(\ref{modified}),
and provides a rough estimate of the condition number of $\tilde{A}$.
We took advantage of the special structure of $\tilde{A}$
(as the sum of a sparse matrix and a rank-10 matrix)
in order to accelerate the application of $\tilde{A}$ and its adjoint
to vectors.
For Tables~\ref{low_time} and~\ref{low_err}
(just as in Tables~\ref{big_time} and~\ref{big_err}),
we took the entries of $G$ in~(\ref{testing}) to be
(Mitchell-Moore-Brent-Knuth) lagged Fibonacci pseudorandom sequences,
uniformly distributed on $[-1,1]$,
constructed using solely floating-point additions and subtractions
(with no integer arithmetic);
see, for example, Section~7.1.5 of~\cite{press-teukolsky-vetterling-flannery}.

\begin{itemize}
\item[] The following list describes the headings of the tables:
\item $m$ is the number of rows in the matrix for which we are computing
the orthogonal projection onto the null space. 
\item $n$ is the number of columns in the matrix for which we are computing
the orthogonal projection onto the null space.
\item $l$ is the number of columns in the random matrix $G$
from~(\ref{testing}).
\item $\kappa$ is the condition number of the matrix $A$
defined in~(\ref{test_mat}).
\item $s_{\rm pre}$ is the time in seconds required to compute $A \, A^*$
(or $\tilde{A} \, \tilde{A}^*$, for Table~\ref{low_time})
and its pivoted $QR$ decomposition.
\item $s_{\rm pro}$ is the time in seconds required
to compute directly the orthogonal projection
$b - A^* \, (A \, A^*)^{-1} \, A \, b$
(or $b - \tilde{A}^* \, (\tilde{A} \, \tilde{A}^*)^{-1} \, \tilde{A} \, b$,
for Table~\ref{low_time}) for a vector $b$,
having already computed a pivoted $QR$ decomposition
of $A \, A^*$ (or $\tilde{A} \, \tilde{A}^*$, for Table~\ref{low_time}).
Thus, $s_{\rm pre} + j \cdot s_{\rm pro}$ is the time required
by a standard method for orthogonally projecting $j$ vectors
onto the null space.
\item $t_{\rm pre}$ is the time in seconds required to construct the matrices
$R$, $\Pi$, and $Y$ defined in Section~\ref{algorithm}.
\item $t_{\rm pro}$ is the time in seconds required by the randomized algorithm
to compute the orthogonal projection of a vector onto the null space,
having already computed the matrices $R$, $\Pi$, and $Y$
defined in Section~\ref{algorithm}.
Thus, $t_{\rm pre} + j \cdot t_{\rm pro}$ is the time required by the algorithm
of the present paper for orthogonally projecting $j$ vectors
onto the null space.
\item $\delta_{\rm norm}$ is a measure of the error
of a standard method for orthogonally projecting onto the null space.
Specifically, $\delta_{\rm norm}$ is the Euclidean norm of the result
of applying $A$ (or $\tilde{A}$, for Table~\ref{low_err})
to the orthogonal projection (onto the null space) of a random unit vector $b$,
computed as $b - A^* \, (A \, A^*)^{-1} \, A \, b$
or $b - \tilde{A}^* \, (\tilde{A} \, \tilde{A}^*)^{-1} \, \tilde{A} \, b$.
(In Tables~\ref{big_err},~\ref{big_err_norm}, and~\ref{low_err},
$\delta_{\rm norm}$ is the maximum encountered over 100 independent
realizations of the random vector $b$.)
The tables report $\delta_{\rm norm}$ divided by the condition number,
since this is generally more informative (see Remark~\ref{perturbation} below).
\item $\epsilon_{\rm norm}$ is a measure of the error of a standard method
for orthogonally projecting onto the null space.
Specifically, $\epsilon_{\rm norm}$ is the Euclidean norm of the difference
between the orthogonal projection (onto the null space)
of a random unit vector $b$
(computed as $b - A^* \, (A \, A^*)^{-1} \, A \, b$
or $b - \tilde{A}^* \, (\tilde{A} \, \tilde{A}^*)^{-1} \, \tilde{A} \, b$)
and its own orthogonal projection
(computed as $\tilde{b} - A^* \, (A \, A^*)^{-1} \, A \, \tilde{b}$ or
$\tilde{b} - \tilde{A}^* \, (\tilde{A} \, \tilde{A}^*)^{-1} \, \tilde{A}
\, \tilde{b}$), where $\tilde{b} = b - A^* \, (A \, A^*)^{-1} \, A \, b$ or
$\tilde{b}
= b - \tilde{A}^* \, (\tilde{A} \, \tilde{A}^*)^{-1} \, \tilde{A} \, b$
(as computed).
(In Tables~\ref{big_err},~\ref{big_err_norm}, and~\ref{low_err},
$\epsilon_{\rm norm}$ is the maximum encountered over 100
independent realizations of the random vector $b$.)
The tables report $\epsilon_{\rm norm}$ divided by the condition number,
since this is generally more informative (see Remark~\ref{perturbation} below).
\item $\delta_{\rm rand}$ is a measure of the error
of the algorithm of the present paper.
Specifically, $\delta_{\rm rand}$ is the Euclidean norm of the result
of applying $A$ (or $\tilde{A}$, for Table~\ref{low_err})
to the orthogonal projection (onto the null space) of a random unit vector $b$
(with the projection computed via the randomized algorithm).
(In Tables~\ref{big_err},~\ref{big_err_norm}, and~\ref{low_err},
$\delta_{\rm rand}$ is the maximum encountered over 100 independent
realizations of the random vector $b$.)
The tables report $\delta_{\rm rand}$ divided by the condition number,
since this is generally more informative (see Remark~\ref{perturbation} below).
\item $\epsilon_{\rm rand}$ is a measure of the error
of the algorithm of the present paper.
Specifically, $\epsilon_{\rm rand}$ is the Euclidean norm of the difference
between the orthogonal projection (onto the null space)
of a random unit vector $b$ (with the projection computed
via the randomized algorithm),
and the projection of the projection of $b$
(also computed via the algorithm of the present paper).
(In Tables~\ref{big_err},~\ref{big_err_norm}, and~\ref{low_err},
$\epsilon_{\rm rand}$ is the maximum encountered over 100
independent realizations of the random vector $b$.)
The tables report $\epsilon_{\rm rand}$ divided by the condition number,
since this is generally more informative (see Remark~\ref{perturbation} below).
\end{itemize}

\begin{table}
\caption{Timings for the sparse matrix $A$ defined in~(\ref{test_mat})}
\label{big_time}
\vspace{1em}
\begin{tabular*}{\columnwidth}{@{\extracolsep{\fill}}rrrlllll}
$m$ & $n$ & $l$ & $\kappa$ & $s_{\rm pre}$ & $s_{\rm pro}$ & $t_{\rm pre}$ & $t_{\rm pro}$ \\\hline\hline
  40 & 1000000 &   44 &  1E8 & .23E2 & .91E0   & .35E2 & .90E0   \\
 400 & 1000000 &  404 &  1E8 & .23E3 & .90E0   & .35E3 & .90E0   \\
4000 & 1000000 & 4004 &  1E8 & .25E4 & .10E1   & .38E4 & .12E1   \\\hline
1000 &    3000 & 1004 &  1E8 & .29E1 & .85E--2 & .19E2 & .15E--1 \\
1000 &   30000 & 1004 &  1E8 & .69E1 & .16E--1 & .25E2 & .22E--1 \\
1000 &  300000 & 1004 &  1E8 & .13E3 & .21E0   & .20E3 & .21E0   \\
1000 & 3000000 & 1004 &  1E8 & .19E4 & .30E1   & .28E4 & .30E1   \\\hline
2000 & 1000000 & 2000 &  1E8 & .12E4 & .92E0   & .19E4 & .96E0   \\
2000 & 1000000 & 4000 &  1E8 & .12E4 & .93E0   & .25E4 & .98E0   \\
2000 & 1000000 & 8000 &  1E8 & .12E4 & .95E0   & .36E4 & .98E0   \\\hline
4000 & 1000000 & 4004 &  1E4 & .26E4 & .11E1   & .39E4 & .12E1   \\
4000 & 1000000 & 4004 &  1E6 & .25E4 & .10E1   & .39E4 & .12E1   \\
4000 & 1000000 & 4004 &  1E8 & .25E4 & .10E1   & .38E4 & .12E1   \\
4000 & 1000000 & 4004 & 1E10 & .25E4 & .11E1   & .38E4 & .12E1   \\
4000 & 1000000 & 4004 & 1E12 & .25E4 & .10E1   & .39E4 & .12E1
\end{tabular*}
\end{table}

\begin{table}
\caption{Errors for the sparse matrix $A$ defined in~(\ref{test_mat})}
\label{big_err}
\vspace{1em}
\begin{tabular*}{\columnwidth}{@{\extracolsep{\fill}}rrrlllll}
$m$ & $n$ & $l$ & $\kappa$ & $\delta_{\rm norm}/\kappa$ & $\epsilon_{\rm norm}/\kappa$ & $\delta_{\rm rand}/\kappa$ & $\epsilon_{\rm rand}/\kappa$ \\\hline\hline
  40 & 1000000 &   44 &  1E8 & .20E--14 & .21E--13 & .20E--15 & .61E--14 \\
 400 & 1000000 &  404 &  1E8 & .23E--14 & .67E--11 & .48E--15 & .11E--14 \\
4000 & 1000000 & 4004 &  1E8 & .25E--14 & .16E--08 & .17E--14 & .24E--15 \\\hline
1000 &    3000 & 1004 &  1E8 & .17E--14 & .20E--08 & .85E--15 & .12E--15 \\
1000 &   30000 & 1004 &  1E8 & .27E--13 & .54E--06 & .11E--14 & .95E--16 \\
1000 &  300000 & 1004 &  1E8 & .15E--14 & .16E--09 & .76E--15 & .35E--15 \\
1000 & 3000000 & 1004 &  1E8 & .22E--14 & .57E--11 & .11E--14 & .96E--15 \\\hline
2000 & 1000000 & 2000 &  1E8 & .23E--14 & .32E--09 & .12E--14 & .33E--15 \\
2000 & 1000000 & 4000 &  1E8 & .23E--14 & .32E--09 & .80E--15 & .17E--15 \\
2000 & 1000000 & 8000 &  1E8 & .23E--14 & .32E--09 & .75E--15 & .17E--15 \\\hline
4000 & 1000000 & 4004 &  1E4 & .22E--13 & .79E--12 & .59E--14 & .69E--15 \\
4000 & 1000000 & 4004 &  1E6 & .46E--14 & .15E--10 & .32E--14 & .38E--15 \\
4000 & 1000000 & 4004 &  1E8 & .25E--14 & .16E--08 & .17E--14 & .24E--15 \\
4000 & 1000000 & 4004 & 1E10 & .37E--17 & .13E--12 & .89E--15 & .15E--15 \\
4000 & 1000000 & 4004 & 1E12 & .59E--19 & .13E--14 & .45E--15 & .13E--15
\end{tabular*}
\end{table}

\begin{table}
\caption{Timings using normal variates}
\label{big_time_norm}
\vspace{1em}
\begin{tabular*}{\columnwidth}{@{\extracolsep{\fill}}rrrlllll}
$m$ & $n$ & $l$ & $\kappa$ & $s_{\rm pre}$ & $s_{\rm pro}$ & $t_{\rm pre}$ & $t_{\rm pro}$ \\\hline\hline
4000 & 1000000 & 4004 &  1E4 & .26E4 & .11E1   & .54E4 & .12E1   \\
4000 & 1000000 & 4004 &  1E6 & .25E4 & .10E1   & .54E4 & .12E1   \\
4000 & 1000000 & 4004 &  1E8 & .25E4 & .10E1   & .54E4 & .12E1   \\
4000 & 1000000 & 4004 & 1E10 & .25E4 & .11E1   & .56E4 & .13E1   \\
4000 & 1000000 & 4004 & 1E12 & .25E4 & .10E1   & .54E4 & .12E1
\end{tabular*}
\end{table}

\begin{table}
\caption{Errors using normal variates}
\label{big_err_norm}
\vspace{1em}
\begin{tabular*}{\columnwidth}{@{\extracolsep{\fill}}rrrlllll}
$m$ & $n$ & $l$ & $\kappa$ & $\delta_{\rm norm}/\kappa$ & $\epsilon_{\rm norm}/\kappa$ & $\delta_{\rm rand}/\kappa$ & $\epsilon_{\rm rand}/\kappa$ \\\hline\hline
4000 & 1000000 & 4004 &  1E4 & .22E--13 & .79E--12 & .59E--14 & .68E--15 \\
4000 & 1000000 & 4004 &  1E6 & .46E--14 & .15E--10 & .36E--14 & .38E--15 \\
4000 & 1000000 & 4004 &  1E8 & .25E--14 & .16E--08 & .20E--14 & .23E--15 \\
4000 & 1000000 & 4004 & 1E10 & .37E--17 & .13E--12 & .95E--15 & .15E--15 \\
4000 & 1000000 & 4004 & 1E12 & .59E--19 & .13E--14 & .43E--15 & .14E--15
\end{tabular*}
\end{table}

\begin{table}
\caption{Timings for the dense matrix $\tilde{A}$ defined in~(\ref{modified})}
\label{low_time}
\vspace{1em}
\begin{tabular*}{\columnwidth}{@{\extracolsep{\fill}}rrrlllll}
$m$ & $n$ & $l$ & $\kappa$ & $s_{\rm pre}$ & $s_{\rm pro}$ & $t_{\rm pre}$ & $t_{\rm pro}$ \\\hline\hline
  40 & 1000000 &   44 &  1E8 & .28E2 & .11E1   & .42E2 & .11E1   \\
 400 & 1000000 &  404 &  1E8 & .28E3 & .11E1   & .41E3 & .11E1   \\
4000 & 1000000 & 4004 &  1E8 & .30E4 & .12E1   & .56E4 & .15E1   \\\hline
1000 &    3000 & 1004 &  1E8 & .31E1 & .89E--2 & .19E2 & .15E--1 \\
1000 &   30000 & 1004 &  1E8 & .89E1 & .19E--1 & .28E2 & .25E--1 \\
1000 &  300000 & 1004 &  1E8 & .16E3 & .25E0   & .25E3 & .26E0   \\
1000 & 3000000 & 1004 &  1E8 & .23E4 & .34E1   & .33E4 & .34E1
\end{tabular*}
\end{table}

\begin{table}
\caption{Errors for the dense matrix $\tilde{A}$ defined in~(\ref{modified})}
\label{low_err}
\vspace{1em}
\begin{tabular*}{\columnwidth}{@{\extracolsep{\fill}}rrrlllll}
$m$ & $n$ & $l$ & $\kappa$ & $\delta_{\rm norm}/\kappa$ & $\epsilon_{\rm norm}/\kappa$ & $\delta_{\rm rand}/\kappa$ & $\epsilon_{\rm rand}/\kappa$ \\\hline\hline
  40 & 1000000 &   44 &  1E8 & .19E--16 & .48E--16 & .34E--18 & .15E--19 \\
 400 & 1000000 &  404 &  1E8 & .42E--16 & .32E--14 & .68E--17 & .31E--18 \\
4000 & 1000000 & 4004 &  1E8 & .72E--14 & .11E--08 & .14E--14 & .27E--16 \\\hline
1000 &    3000 & 1004 &  1E8 & .48E--15 & .12E--10 & .28E--15 & .52E--16 \\
1000 &   30000 & 1004 &  1E8 & .38E--15 & .12E--10 & .25E--15 & .17E--16 \\
1000 &  300000 & 1004 &  1E8 & .65E--15 & .17E--11 & .30E--15 & .13E--16 \\
1000 & 3000000 & 1004 &  1E8 & .14E--14 & .22E--11 & .28E--15 & .57E--16
\end{tabular*}
\end{table}

\begin{remark}
\label{perturbation}
Standard perturbation theory shows that,
if we add to entries of the matrix~$A$ random numbers of size $\delta$,
and the unit vector $b$ has a substantial projection onto the null space
of $A$, then the entries of the vector $h$ minimizing
the Euclidean norm $\| A^* \, h - b \|$ change by about $\delta \cdot C^2$,
where $C$ is the condition number of $A$
(see, for example, formula 1.4.26 in~\cite{bjorck1}).
Thus, it is generally meaningless to compute $h$ more accurately
than $\epsilon \cdot C^2$, where $\epsilon$ is the machine precision,
and $C$ is the condition number.
Similarly, it is generally meaningless to compute the orthogonal projection
onto the null space of $A$ more accurately than $\epsilon \cdot C$,
where $\epsilon$ is the machine precision,
and $C$ is the condition number of $A$.
\end{remark}

We performed all computations using IEEE standard double-precision variables,
whose mantissas have approximately one bit of precision less than 16 digits
(so that the relative precision of the variables is approximately .2E--15).
We ran all computations on one core
of a 1.86~GHz Intel Centrino Core Duo microprocessor
with 2~MB of L2 cache and 1~GB of RAM.
We compiled the Fortran~77 code
using the Lahey/Fujitsu Linux Express v6.2 compiler,
with the optimization flag {\tt {-}{-}o2} enabled.
We used plane (Householder) reflections to compute all pivoted $QR$
decompositions (see, for example, Chapter~5 of~\cite{golub-van_loan}).

\begin{remark}
The numerical results reported here and our further experiments
indicate that the timings of the classical scheme and the randomized method
are similar, whereas the randomized method produces more accurate projections
(specifically, the randomized projections are idempotent to higher precision).
The quality of the pseudorandom numbers does not affect the accuracy
of the algorithm much, if at all, nor does replacing the normal variates
with uniform variates for the entries of $G$ in~(\ref{testing}).
\end{remark}

\section*{Acknowledgements}

We would like to thank Professor Ming Gu of UC Berkeley for his insights.
We are grateful for the grants supporting this research,
ONR grant N00014-07-1-0711, AFOSR grant FA9550-09-1-0241,
and DARPA/AFOSR grant FA9550-07-1-0541.


\bibliographystyle{siam}
\bibliography{spre}

\end{document}